\DeclareMathAlphabet{\pazocal}{OMS}{zplm}{m}{n}
\numberwithin{equation}{section}   %%numera le equazioni sezione per sezione
\newtheorem{theorem}{Theorem}[section]
\newtheorem{lemma}[theorem]{Lemma}
\newtheorem{proposition}[theorem]{Proposition}
\theoremstyle{definition}
\newtheorem{remark}[theorem]{Remark}
\newcommand*\bigcdot{\mathpalette\bigcdot@{.5}}
\newcommand*\bigcdot@[2]{\mathbin{\vcenter{\hbox{\scalebox{#2}{$\m@th#1\bullet$}}}}}
\DeclareMathOperator{\tr}{tr}             % trace
\newdimen\deltay
\def\Ddot#1#2(#3,#4,#5,#6){\deltay=#6\setbox1=\hbox to0pt{\smash{\dotcnt=1
\kern#3\loop\raise\dotcnt\deltay\hbox to0pt{\hss#2}\kern#5\ifnum\dotcnt<#1
\advance\dotcnt 1\repeat}\hss}\setbox2=\vtop{\box1}\ht2=#4\box2}
\def\Blue{\color{blue}}
\newcommand{\R}{\mathbb{R}}
\newcommand{\C}{\mathbb{C}}
\newcommand{\N}{\mathbb{N}}
\title[On the orthogonality of eigenspaces]{
On the orthogonality of generalized eigenspaces for the  Ornstein--Uhlenbeck  operator }
\subjclass[2000]{
15A18,   %	Eigenvalues, singular values, and eigenvectors
 	47A70,  %  	(Generalized) eigenfunction expansions of linear operators; rigged Hilbert spaces
47D03 %semigruppi;
 }
\author{Valentina Casarino}
\address{Universit\`a degli Studi di Padova\\Stradella san Nicola 3 \\I-36100 Vicenza \\ Italy}
\email{valentina.casarino@unipd.it}
\author{Paolo Ciatti}
\address{Universit\`a degli Studi di Padova\\Via Marzolo 9 \\I-35100 Padova \\ Italy}
\email{paolo.ciatti@unipd.it}
\author{Peter Sj\"ogren}
\address{Mathematical Sciences,  University of Gothenburg and  Mathematical Sciences\\ \hspace*{9pt} Chalmers University of Technology  \\ SE - 412 96 G\"oteborg, Sweden}
\email{peters@chalmers.se}
\thanks{The first and  second authors
are members of the Gruppo Nazionale per l'Analisi Matematica, la Probabilit\`a e le loro Applicazioni (GNAMPA) of the Istituto Nazionale di Alta Matematica (INdAM)
and
were partially supported by GNAMPA (Project 2020
``Alla frontiera tra l'analisi complessa in pi\`u variabili e l'analisi armonica") }
\date{\today, \thistime}
\keywords{{{Ornstein--Uhlenbeck operator,  generalized eigenspaces,  orthogonality, Gaussian measure.}}}
\begin{document}

\begin{abstract}
We study  the orthogonality  of the generalized eigenspaces of an  Ornstein--Uhlenbeck operator $\mathcal L$ in $\R^N$,
with drift given by a real   matrix $B$ whose eigenvalues have negative real parts.
 If $B$ has only one eigenvalue, we prove that  any two distinct generalized eigenspaces  of
 $\mathcal L$ are orthogonal with respect to the invariant Gaussian measure.
Then we show by means of two examples that if $B$ admits distinct eigenvalues, the generalized eigenspaces of $\mathcal L$ may
or may not be orthogonal.
\end{abstract}

%%\large

\maketitle
\section{Introduction}

In this note we discuss the orthogonality  of the generalized eigenspaces associated to a general Ornstein--Uhlenbeck operator $\mathcal L$ in $\R^N$.

Recently, the authors started studying some harmonic analysis  issues
 in a nonsymmmetric Gaussian  context \cite{CCS1, CCS2, CCS3}.
In particular,  the Ornstein--Uhlenbeck  semigroup $\big(\mathcal H_t\big)_{t>0}$  generated by $\mathcal L$
 is  not assumed self-adjoint  in $L^2(\gamma_\infty)$; here
 $\gamma_\infty$  denotes the unique invariant probability measure under the action of  the semigroup, and  will be specified later.

In this general framework,
the  Ornstein--Uhlenbeck operator $\mathcal L$
 admits a complete system of generalized eigenfunctions; see \cite{MPP}. But without
 self-adjointness, the orthogonality of distinct  eigenspaces of $\mathcal L$ is  not guaranteed.
In fact, while the kernel of $\mathcal L$ is always orthogonal to the other generalized eigenspaces of $\mathcal L$
 in $L^2(\gamma_\infty)$,
 the question of orthogonality
between generalized eigenspaces associated to nonzero eigenvalues is more delicate.
As expected, the spectral properties of $B$  play a prominent role here.
Indeed, we prove in Section  \ref{single}
 that if $B$ has a unique eigenvalue, then any two
 generalized eigenfunctions of $\mathcal L$ corresponding to different eigenvalues are orthogonal in $L^2(\gamma_\infty)$.

Then in Sections \ref{Example1} and   \ref{ad-case}
 we exhibit two examples showing, respectively, that
if $B$ admits two distinct eigenvalues,
 the generalized eigenspaces associated to $\mathcal L$
may or may not be orthogonal.
%{\Blue{
%Qui potremmo inserire la frase che segue, ma mi sembra piuttosto inutile.
%}} %%NOI LA TOGLIEREMMO, PETER
%{\Red{
%We point out that the Jordan decomposition implies that the generalized eigenspaces of  $\mathcal L$ are always linearly independent, in the sense that any finite set of generalized %eigenfunctions with different eigenvalues is linearly independent.
%}}
%{\Blue{
%Menzionamo qui la prop.\ 5.2 (se decidiamo di includerla)? Allora:}} Sì, Peter.
The last section also contains a result which relates orthogonality of the eigenspaces of  $\mathcal L$ to that of the eigenspaces of the drift matrix, under some restrictions.

In the following, the symbol
  $I_k$ will  denote  the identity matrix of size $k$,
  and we omit the subscript when the size is obvious.
  We will write  $\langle .,.\rangle$ for scalar products both in $\R^N$ and in
  $L^2(\gamma_\infty)$.
By $\N$ we mean $\{0, 1, \dots\}$.

\section{The Ornstein--Uhlenbeck operator}\label{OU-generalities}
In this section,
we specify  the  definition of the Ornstein--Uhlenbeck operator  $\mathcal L$ and recall  some known facts concerning its spectrum.

We consider  the
Ornstein--Uhlenbeck semigroup
$\big(
\mathcal H_t^{Q,B}
\big)_{t> 0}\,,$  given for all  bounded continuous functions $f$ in $\R^N$, $N\ge 1$, and all $t>0$
 by the Kolmogorov formula
\begin{equation*}
\mathcal H_t^{Q,B}
f(x)=
\int
f(e^{tB}x-y)\,d\gamma_t (y)\,, \quad x\in\R^N\,,
\end{equation*}
(see \cite{Kolmog}).
Here $B$ is a real $N\times N$ matrix whose eigenvalues have negative real parts, and $Q$ is a real, symmetric and positive-definite $N\times N$ matrix.
  %and  for  $t\in (0,+\infty]$ $\gamma_t$ is a Gaussian measure in $\R^N$ defined as follows.
Then
we introduce  the covariance  matrices
$$Q_t=\int_0^t
e^{sB}\,Q\,
e^{sB^*}ds\,,\qquad \text{ $t\in (0,+\infty]$,
}$$
  all symmetric and positive definite.
Finally, the normalized Gaussian measures  $\gamma_t$ are defined for  $t\in (0,+\infty]$ by
$$
d\gamma_t(x) =
(2\pi)^{-\frac{N}{2}}
(\text{det} \, Q_t)^{-\frac{1}{2} }
e^{-\frac12 \langle Q_t^{-1}x,x\rangle}\,dx.
           % \,,\qquad \text{ $t\in (0,+\infty]$}
  $$
As mentioned above,  $\gamma_\infty$ is the unique invariant probability measure of the Ornstein--Uhlenbeck semigroup.

%%Here   $\gamma_\infty$ is the unique invariant measure.
The Ornstein--Uhlenbeck operator is  the  infinitesimal generator  of the semigroup
$\big(
\mathcal H_t^{Q,B}
\big)_{t> 0}\,$, and it is explicitly given by
$$\mathcal L^{Q,B} f(x)=
\frac12
\tr
\big(
Q\nabla^2
f\big)(x)+
\langle Bx, \nabla f(x)\rangle
\,,\qquad
{\text{ $f\in
\mathcal S (\R^N)$,}}
$$
where $\nabla$ is the gradient and $\nabla^2$ the Hessian.

By convention, we abbreviate
$
\mathcal H_t^{Q,B}
$
 and
 $\mathcal L^{Q,B} $
 to $\mathcal H_t$
 and
 $\mathcal L$, respectively.
We can thus write
$\mathcal H_t=e^{t\mathcal L}$.

%%The  case when the semigroup involves  several building blocks
% seems not to have  been considered as yet. Indeed,
% Mauceri and Noselli write  ``already the case where $B$ is a diagonal matrix with at least two different eigenvalues
%seems to require new ideas".

In \cite[Theorem 3.1]{MPP} it is verified that the spectrum of  $\mathcal L$
is the set
\begin{equation}\label{spectrum_L}
                 %\sigma (\mathcal L)=
 \left\{
 \sum_{j=1}^r n_j \lambda_j\,:\, n_j\in\mathbb N    \right\},
\end{equation}
where $ \lambda_1, \ldots,  \lambda_r$ are the  eigenvalues of  the drift matrix $B$.
 In particular,
 $0$ is an  eigenvalue of  $\mathcal L$, and the corresponding eigenspace $\ker  \mathcal L$ is one-dimensional and consists of all  constant functions,
 as proved in \cite[Section 3]{MPP}.
%Any other point in the spectrum of  $-\mathcal L$ belongs to a fixed cone $\{z \in \C : |\arg z| < \mu\}$
%with $\mu < \pi/2$, since the same is true for the numbers $ -\lambda_1, \ldots,  -\lambda_r$.

We also recall that, given a linear operator $T$ on some $L^2$ space, a number $\lambda \in \C$   %%%abbiamo cambiato L in T, perché nell'esempio 2 c'è un altro operatore L
is a  generalized eigenvalue  of $T$ if there exists a nonzero $u \in L^2$ such that
$(T - \lambda I)^k \,u=0$
for some positive integer $k$. Then $u$
 is called a generalized eigenfunction, and those $u$ span the  generalized eigenspace corresponding  to $\lambda$.
 As already recalled,
 it is known from \cite[Section 3]{MPP} that
the  Ornstein--Uhlenbeck operator $\mathcal L$
         % \subsection{The Mehler kernel}
 admits a complete system of generalized eigenfunctions, that is, the linear span
 of the generalized eigenfunctions  is dense in $L^2(\gamma_\infty)$.
 %Analogous  $L^p$ results are obtained in \cite[Theorem~3.1]{MPP} but will not be used in our paper.
   %%%    %  In fact,  analogous results are true for the infinitesimal generator of the Ornstein--Uhlenbeck semigroup in {\color{red}{  as well \cite[Theorem 3.1]{MPP}, but we will not use this more general fact.

\vskip7pt

\noindent \textit{Subsection 2.1. \hspace{-6pt} Use of Hermite polynomials}.\hskip7pt
As proved in \cite{MPRS},
a suitable linear change of coordinates in $\R^N$ makes  $Q=I$ and $Q_\infty$ diagonal.
%, where $I$ denotes  the identity matrix of size $N$.
  When applying this,
  we adhere  to the notation introduced in  Lemma 1 in  \cite{Chill}, where also the following facts can be found.
Let $\mathbf H_n$ denote the  space of Hermite polynomials of degree $n$ in these coordinates, adapted
 by means of a dilation
  to $\gamma_\infty$
in the sense that the  $\mathbf H_n$ are mutually orthogonal in  $L^2(\gamma_\infty)$ (they are written $H_{\lambda,k}$ in \cite{Chill}).
The classical Hermite expansion % Or The classical Hermite decomposition".
 (called the It\^o-Wiener decomposition in \cite{Chill}) says that $L^2(\gamma_\infty)$
is the closure of the direct sum of the  $\mathbf H_n$; we refer to  %\cite[Theorem V]{Stone} or
\cite[p.\,64]{Wiener} for a proof in dimension one and note that  the extension to higher dimension  is  trivial.
In other words, we can decompose  any function  $u\in L^2(\gamma_\infty)$
     %In the light of the It\^o-Wiener decomposition, this function can be decomposed
  as %a finite sum
   \begin{equation}\label{decomp}
  u = \sum_j u_j
  \end{equation}
with $u_j \in \mathbf H_j$
and convergence in $L^2(\gamma_\infty)$.
Further, each
$\mathbf H_n$  is
 invariant under  $\mathcal L$; see \cite[Proposition 1]{Chill}.

The %It\^o-Wiener
Hermite decomposition  implies, in particular, that each  generalized eigenfunction of $\mathcal L$
with a nonzero
eigenvalue is
 orthogonal to the space of constant functions, that is, to the kernel of $\mathcal L$.
Anyway, we provide here a  proof of this fact which is independent of Hermite polynomials.

\begin{lemma}
 Let $\lambda \ne 0$.
 If $u \in L^2(\gamma_\infty)$  and $(\mathcal{L} - \lambda)^k \,u = 0$ for some $k \in \{ 1, 2,\dots\}$,
 then $\int u\,d\gamma_\infty = 0$.
\end{lemma}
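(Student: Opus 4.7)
The plan is to combine the invariance of $\gamma_\infty$ under the semigroup $(\mathcal{H}_t)_{t>0}$ with the spectral fact recalled in \eqref{spectrum_L}: since every eigenvalue of $B$ has negative real part, every nonzero element of the spectrum of $\mathcal{L}$ also has negative real part. In particular, the given $\lambda$ satisfies $\operatorname{Re}\lambda<0$.

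The first step is to work on the finite-dimensional generalized eigenspace $V:=\ker(\mathcal{L}-\lambda)^k$. On $V$ the operator $\mathcal{L}$ acts as $\lambda I+N$, where $N:=(\mathcal{L}-\lambda)|_V$ satisfies $N^k=0$. Because $\mathcal{H}_t=e^{t\mathcal{L}}$ commutes with $\mathcal{L}$, the space $V$ is invariant under $\mathcal{H}_t$, and on this finite-dimensional invariant subspace the semigroup reduces to the matrix exponential
\[
\mathcal{H}_t u \;=\; e^{t\lambda}\,e^{tN}u\;=\;e^{t\lambda}\sum_{j=0}^{k-1}\frac{t^{j}}{j!}\,N^{j}u.
\]

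Next, I integrate this identity against $\gamma_\infty$. The invariance of $\gamma_\infty$ gives $\int\mathcal{H}_t u\,d\gamma_\infty=\int u\,d\gamma_\infty$ for every $t>0$; note that $\mathcal{H}_t u\in L^2(\gamma_\infty)\subset L^1(\gamma_\infty)$ since $\gamma_\infty$ is a probability measure. Setting $c_j:=\int N^{j}u\,d\gamma_\infty$, I obtain
\[
\int u\,d\gamma_\infty \;=\; e^{t\lambda}\sum_{j=0}^{k-1}\frac{t^{j}}{j!}\,c_{j}\qquad\text{for all } t>0.
\]
The right-hand side is a polynomial in $t$ multiplied by $e^{t\lambda}$. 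Letting $t\to+\infty$ and using $\operatorname{Re}\lambda<0$, this right-hand side tends to $0$, forcing $\int u\,d\gamma_\infty=0$.

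The only delicate point is the rigorous justification that $V$ is invariant under $\mathcal{H}_t$ and that $\mathcal{H}_t|_V=e^{t\lambda}e^{tN}$; this follows from the commutation $\mathcal{H}_t(\mathcal{L}-\lambda)^k=(\mathcal{L}-\lambda)^k\mathcal{H}_t$ on the domain of $\mathcal{L}^k$ (so $\mathcal{H}_t V\subset V$) and the standard fact that the restriction of a $C_0$-semigroup to a finite-dimensional invariant subspace is given by the exponential of the restricted generator. All other steps are routine.
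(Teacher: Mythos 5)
Your proof is correct, but it takes a genuinely different route from the paper's. The paper argues by induction on $k$: assuming the statement for exponent $k$, it notes that $(\mathcal L-\lambda)^k u$ is an eigenfunction, commutes $(\mathcal L-\lambda)^k$ past $e^{t\mathcal L}$ to obtain $(\mathcal L-\lambda)^k\bigl(e^{t\mathcal L}u-e^{t\lambda}u\bigr)=0$, applies the inductive hypothesis together with the invariance of $\gamma_\infty$ to reach $\int u\,d\gamma_\infty=e^{t\lambda}\int u\,d\gamma_\infty$ for all $t>0$, and concludes from $\lambda\ne 0$ alone. You instead compute $\mathcal H_t u$ explicitly on the generalized eigenspace as $e^{t\lambda}e^{tN}u$ with $N$ nilpotent, integrate against $\gamma_\infty$, and let $t\to+\infty$. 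The trade-off is that your argument requires the spectral localization $\operatorname{Re}\lambda<0$, which you must import from \eqref{spectrum_L} and which only applies after you observe that $u$ may be assumed nonzero (otherwise the claim is vacuous) so that $\lambda$ really lies in the spectrum; the paper's induction needs no information about where the spectrum sits and works for any $\lambda\ne 0$. One point to tidy in your write-up: you assert without justification that $V=\ker(\mathcal L-\lambda)^k$ is finite dimensional. This is true for the Ornstein--Uhlenbeck operator but is not needed: either restrict to the finite-dimensional cyclic subspace spanned by $u,Nu,\dots,N^{k-1}u$, or verify directly that $w(t)=e^{t\lambda}\sum_{j=0}^{k-1}\frac{t^j}{j!}N^j u$ solves $w'=\mathcal Lw$, $w(0)=u$, and invoke uniqueness for the abstract Cauchy problem. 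With that fix the argument is complete.
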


\begin{proof}
The implication is trivial if we set  $k=0$, so assume it holds for some $k \ge 0$ and that
$(\mathcal{L} - \lambda)^{k+1}\, u = 0$.

Then
\begin{equation*}
  \mathcal{L}(\mathcal{L} - \lambda)^k\, u   = \lambda   (\mathcal{L} - \lambda)^k\, u,
\end{equation*}
and thus for any $t>0$
\begin{equation*}
 e^{t \mathcal{L}}(\mathcal{L} - \lambda)^k \,u   = e^{t\lambda}   (\mathcal{L} - \lambda)^k\, u.
\end{equation*}
These operators commute, so
\begin{equation*}
(\mathcal{L} - \lambda)^k\, e^{t \mathcal{L}} u   =(\mathcal{L} - \lambda)^k \,e^{t\lambda}  u,
\end{equation*}
that is,
\begin{equation*}
(\mathcal{L} - \lambda)^k \,
(e^{t \mathcal{L}} u - e^{t\lambda} u) = 0.
\end{equation*}
The induction assumption now implies that
\begin{equation*}
\int (e^{t \mathcal{L}} u - e^{t\lambda} u)\,d\gamma_\infty = 0.
\end{equation*}
Since $\gamma_\infty$ is invariant under the semigroup, this means that
\begin{equation*}
\int u \,d\gamma_\infty  =   e^{t\lambda} \int u \,d\gamma_\infty
\end{equation*}
for all $t>0$. Thus the integral vanishes.
\end{proof}
\bigskip

{\Blue{
%As an immediate consequence, the orthogonal complement of $\ker \mathcal L$ in $L^2(\gamma_\infty)$ coincides with the closure of the subspace generated by all generalized eigenfunctions
%with a nonzero generalized eigenvalue.
%We denote this subspace by $L^2_0(\gamma_\infty)$. Thus $P_0^{\perp}$  is the orthogonal projection  onto $L^2_0(\gamma_\infty)$.

   %In light of %of what has been recalled above (see also \cite[Proposition 4.2]{MPP}, $L^2_0(\gamma_\infty)$ may be written as the  sum of finite dimensional generalized eigenspaces.\footnote{\color{red}{Questo, Peter, \`e da controllare, per favore.}}

%The restriction of $-\mathcal L$ to the  generalized eigenspace corresponding to an eigenvalue $\lambda \ne 0$ has the form
%\begin{equation}\label{nilpotent} -\mathcal L=\lambda(I+N),\end{equation} where $N$ is a nilpotent operator.

}}

\section{The case when $B$ has only one eigenvalue}\label{single}

\begin{proposition}\label{oneeigenvalue}
If the drift matrix $B$ has only one eigenvalue, then
any two generalized eigenfunctions of $\mathcal L$ with different eigenvalues are orthogonal with respect to $\gamma_\infty$.
\end{proposition}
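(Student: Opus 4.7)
The strategy is to exploit the It\^o--Wiener decomposition $L^2(\misgaussk) = \overline{\bigoplus_{n\ge 0} \mathbf{H}_n}$ recalled in Subsection 2.1. Let $\lambda$ denote the unique eigenvalue of $B$; since $\mathrm{Re}\,\lambda<0$ we have $\lambda\neq 0$, and the spectrum description \eqref{spectrum_L} collapses the set of generalized eigenvalues of $\mathcal L$ to $\{n\lambda:n\in\N\}$. The heart of the argument is the structural fact that every generalized eigenfunction $u$ of $\mathcal L$ with eigenvalue $n\lambda$ must in fact lie entirely in $\mathbf{H}_n$. Granted this, the proposition is immediate: given $u$ with eigenvalue $n\lambda$ and $v$ with eigenvalue $m\lambda$ and $n\neq m$, we get $u\in\mathbf{H}_n$ and $v\in\mathbf{H}_m$, and these two Hermite spaces are orthogonal in $L^2(\misgaussk)$.

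To establish the structural fact, I would decompose $u=\sum_j u_j$ with $u_j\in\mathbf{H}_j$ via \eqref{decomp}. Since the spaces $\mathbf{H}_j$ are pairwise orthogonal and each is finite-dimensional and $\mathcal L$-invariant, applying the orthogonal projection onto $\mathbf{H}_j$ to the identity $(\mathcal L-n\lambda)^k u=0$ yields $(\mathcal L|_{\mathbf{H}_j}-n\lambda)^k u_j=0$ for every $j$. The task then reduces to showing that, for $j\neq n$, the number $n\lambda$ is not a generalized eigenvalue of the finite-dimensional operator $\mathcal L|_{\mathbf{H}_j}$; this makes $\mathcal L|_{\mathbf{H}_j}-n\lambda$ invertible and forces $u_j=0$.

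For the spectral point, one needs that on $\mathbf{H}_j$ the only generalized eigenvalues of $\mathcal L$ are of the form $\sum_i n_i\lambda_i$ with $\sum_i n_i=j$, where $\lambda_1,\dots,\lambda_r$ are the eigenvalues of $B$. Under our hypothesis $\lambda_i=\lambda$ for all $i$, this set collapses to the single value $j\lambda$, which differs from $n\lambda$ whenever $j\ne n$ since $\lambda\ne 0$. This matching between the Hermite degree $j$ and the multi-index sum $\sum n_i=j$ defining a generalized eigenvalue is implicit in the construction of generalized eigenfunctions in \cite{MPP}; it can also be verified directly by passing, as in Subsection 2.1, to coordinates in which $Q=I$, observing that $\mathcal L$ preserves the filtration of polynomials by degree, that $\mathbf{H}_j$ is isomorphic as an $\mathcal L$-module to the graded quotient of homogeneous degree $j$, and that on homogeneous polynomials of degree $j$ the second-order part $\tfrac12\tr(Q\nabla^2)$ drops out, leaving $\langle Bx,\nabla\rangle$, whose spectrum on this space is precisely the set of sums $\sum_i n_i\lambda_i$ with $\sum_i n_i=j$.

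The main delicacy I anticipate is this last spectral identification, in particular verifying that reducing $\mathcal L|_{\mathbf{H}_j}$ to the first-order action on the graded quotient does not introduce or lose eigenvalues, and handling the case when $B$ is not diagonalizable by working in a Jordan basis. Modulo this computation, the proof is a direct combination of the Hermite decomposition with the orthogonality of distinct spaces $\mathbf{H}_j$.
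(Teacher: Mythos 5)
Your overall strategy coincides with the paper's: decompose a generalized eigenfunction along the Hermite spaces $\mathbf H_j$, show that the eigenvalue forces all but one component to vanish, and conclude from the mutual orthogonality of the $\mathbf H_j$. The reduction you describe --- $\mathcal L$ preserves the filtration of polynomials by degree, $\mathbf H_j$ maps isomorphically onto the degree-$j$ graded piece, and on homogeneous polynomials the second-order part of $\mathcal L$ drops out --- is sound, and it is exactly how the paper isolates the first-order operator $\mathcal B f(x)=\langle Bx,\nabla f(x)\rangle$ acting on homogeneous polynomials of degree $j$.

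What you have not actually proved is the assertion you yourself flag as the delicate point: that on homogeneous polynomials of degree $j$ the operator $\mathcal B$ has spectrum contained in $\{\sum_i n_i\lambda_i:\ \sum_i n_i=j\}$, which in the single-eigenvalue case means that $j\lambda$ is the \emph{only} eigenvalue. This is the entire technical content of the paper's key lemma (a polynomial generalized eigenfunction of degree $n$ has eigenvalue $n\lambda$), and appealing to it as ``implicit in the construction in \cite{MPP}'' leaves the argument incomplete at its one nontrivial step. The paper closes it as follows: in a Jordan basis write $B=\lambda I+R$, where $R_{i,i-1}=1$ for $i$ in some subset $P$ of $\{2,\dots,N\}$ and all other entries vanish; then $\mathcal B x^\alpha = j\lambda\, x^\alpha+\sum_{i\in P}\alpha_i\, x^{\alpha+e_{i-1}-e_i}$ for $|\alpha|=j$, so $\mathcal B=j\lambda I+\mathcal R$ where $\mathcal R$ strictly decreases the weight $V(\alpha)=\sum_k k\alpha_k$. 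Ordering the monomial basis by $V$ makes the matrix of $\mathcal R$ triangular with zero diagonal, hence $\mathcal R$ is nilpotent and $j\lambda$ is indeed the only eigenvalue. With that computation supplied, your proof is complete and is essentially the paper's.
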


We let  $\lambda$ be the unique eigenvalue of $B$, which is necessarily real and negative. It is known that all generalized eigenfunctions of  $\mathcal L$ are polynomials, see
 \cite[Thm. 9.3.20]{Lorenzi}.

We first state a lemma and use it to prove the proposition.

\begin{lemma}\label{degreen}
Let  $u$ be a generalized eigenfunction of  $\mathcal L$ which is a polynomial of degree $n \ge 0$. Then  the corresponding eigenvalue is $n\lambda$.
\end{lemma}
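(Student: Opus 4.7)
The plan is to split $\mathcal L$ into its first- and second-order parts, namely $A u = \langle Bx, \nabla u\rangle$ and $D u = \tfrac12 \tr(Q \nabla^2 u)$, and exploit the contrasting effect each has on the degree of a polynomial. The operator $A$ preserves the degree, while $D$ strictly lowers it by $2$. This suggests reading off the eigenvalue $\mu$ from the top-degree homogeneous component of $u$.

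First I would decompose $u = u_n + r$, where $u_n \ne 0$ is the homogeneous part of degree $n$ and $\deg r < n$. Let $\mu$ be the eigenvalue, so $(\mathcal L - \mu)^k u = 0$ for some $k \ge 1$. Since $D$ strictly lowers the degree while $A$ preserves it, the homogeneous part of degree $n$ of $(\mathcal L - \mu)^k u$ equals $(A - \mu)^k u_n$. Therefore $(A - \mu)^k u_n = 0$, so $\mu$ is an eigenvalue of $A$ acting on the space of homogeneous polynomials of degree $n$.

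To identify those eigenvalues, I would write $B = \lambda I + N$ with $N$ nilpotent. Euler's identity $\langle x, \nabla v\rangle = n v$ for homogeneous $v$ of degree $n$ gives $A = n\lambda I + M$ on this space, where $M v = \langle Nx, \nabla v\rangle$. The step that requires some care is showing that $M$ is nilpotent. For this I would choose a real basis of $\R^N$ in which $N$ is strictly upper triangular (available from the real Jordan form of a nilpotent matrix) and introduce the weight $w(x^\alpha) = \sum_{i} i \alpha_i$ on monomials. Then $M$ is a linear combination of terms $N_{ij}\,x_j \partial_i$ with $j > i$, each of which increases $w$ by $j - i > 0$. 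Since $w$ takes only finitely many values on homogeneous polynomials of degree $n$, iterating $M$ must eventually annihilate every such polynomial, so $M$ is nilpotent. Consequently the only eigenvalue of $A$ on this space is $n\lambda$, and therefore $\mu = n\lambda$, as claimed.
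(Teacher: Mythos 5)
Your proof is correct, and its core is the same as the paper's: split $\mathcal L$ into its first-order part $A=\langle Bx,\nabla\cdot\rangle$ and its degree-lowering second-order part, reduce to the action of $A$ on homogeneous polynomials of degree $n$, write $B=\lambda I+N$ with $N$ nilpotent so that Euler's identity gives $A=n\lambda I+M$ there, and kill $M$ by a monomial-weight argument (the paper uses the subdiagonal Jordan form and the decreasing weight $V(\alpha)=\sum_j j\alpha_j$; you use the strictly upper triangular form and the increasing weight $\sum_i i\alpha_i$ --- these are mirror images of each other). The one place where you genuinely diverge is the treatment of the word ``generalized'': the paper first reduces to an honest eigenfunction by decomposing $u$ into its Hermite components $u_j\in\mathbf H_j$, using the invariance of $\mathbf H_n$ under $\mathcal L$ and applying $(\mathcal L-\mu)^m$ to land back in $\mathbf H_n$, whereas you simply observe that the degree-$n$ homogeneous component of $(\mathcal L-\mu)^k u$ is $(A-\mu)^k u_n$, so that $\mu$ is an eigenvalue of the finite-dimensional operator $A$ restricted to homogeneous polynomials of degree $n$. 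Your route is self-contained and slightly more elementary, since it bypasses the Hermite machinery of Subsection 2.1 entirely for this lemma; the paper's route has the side benefit of exhibiting the eigenfunction inside a single $\mathbf H_n$, which is then reused in the proof of Proposition \ref{oneeigenvalue}. Both arguments are complete and valid.
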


\noindent \textit{Proof of Proposition \ref{oneeigenvalue}}. \hskip4pt
 Let  $u$ be  a generalized eigenfunction of  $\mathcal L$, thus satisfying $(\mathcal L - \mu)^k\, u = 0$ for some $\mu \in \C$ and $k \in \N$.
 Applying the coordinates from Subsection 2.1, we can decompose  $u$
 as in \eqref{decomp}, where the sum is now finite.
   Since then
 \begin{equation*}
  \sum_j (\mathcal L - \mu)^k u_j  = 0
 \end{equation*}
 and each term here is in the corresponding $ \mathbf H_j$, all the terms are 0.
 But this is compatible with  Lemma \ref{degreen} only if there is only one nonzero term in the decomposition of $u$. Thus $u  \in \mathbf H_n$, where $n$ is the polynomial  degree of $u$.

Lemma  \ref{degreen} then implies that two generalized eigenfunctions with different eigenvalues
are of different degrees and thus belong to
 different  $\mathbf H_n$. The desired orthogonality now follows from that of the $\mathbf H_n$.
\qed                   %\end{proof}

\medskip

\noindent {\it{Proof of  Lemma  \ref{degreen}.}} \hskip4pt
Let $u$ be  a generalized eigenfunction of  $\mathcal L$ of polynomial  degree $n$. We denote the corresponding eigenvalue by $\mu$.
Decomposing  $u$ as in \eqref{decomp}, we see that this sum is for $j\le n$
and that the term  $u_n$ is nonzero and a  generalized eigenfunction of  $\mathcal L$ with eigenvalue $\mu$.
For some $m$, the function  $(\mathcal L - \mu)^m u_n$ will then be an
eigenfunction   with the same eigenvalue. This function is in  $ \mathbf H_n$
and thus a polynomial of degree $n$. As a result,
 we can assume that $u$ is actually an eigenfunction of  $\mathcal L$,
when proving the lemma.

 We now choose coordinates in $\R^N$ that give a Jordan decomposition of $B$. This means that $B = \lambda I + R$, where $R = (R_{i,j})$ is a matrix with nonzero entries only in the first subdiagonal. More precisely, $R_{i,i-1} = 1$ for $i \in P$, where P is a subset of $\{2,\dots,N\}$,
and all other entries of $R$ vanish.

We write  $\mathcal L = \mathcal S+\mathcal B$, where
\begin{equation*}
 \mathcal B f(x)  = \langle Bx, \nabla f(x)\rangle,
 \end{equation*}
 and $\mathcal S$ is the remaining, second-degree part of $\mathcal L$.
 Notice that, when applied to polynomials, $\mathcal B$ preserves the degree whereas $\mathcal S$ decreases it by 2. So if $v$ is the $n$th-degree part of $u$,  we must have $\mathcal B v = \mu v$.

We let $\mathcal B$ act on a monomial $x^\alpha$, where $\alpha \in \N^N$ is a multiindex, getting
\begin{align*}
\mathcal  B     x^\alpha &= \sum_j \lambda x_j\, \frac {\partial x^\alpha}{\partial x_j}   +      \sum_{i \in P} x_{i-1}\, \frac {\partial x^\alpha} {\partial x_i}    \\
  &=  \lambda  \sum_j \alpha_j \,x^\alpha   +    \sum_{i \in P} \alpha_i \,\frac {x_{i-1} }{x_i} \,x^\alpha
  = \lambda n\, x^\alpha        +    \sum_{i \in P}     \alpha_i \,        x^{\alpha^{(i)}},\end{align*}
where $\alpha^{(i)} = \alpha + e_{i-1} - e_i$ for $i \in P$.  Here $\{e_{j}\}_{j=1}^n$
denotes
the standard basis in $\R^N$.
 Thus the restriction
 of  $\mathcal B$ to the space of homogeneous polynomials of degree $n$ is given as $\lambda n I + \mathcal R$, where $\mathcal R$ is the linear operator that maps  $x^\alpha$ to $  \sum_{i \in P}    \alpha_i  \,     x^{\alpha^{(i)}}$.

 We claim that  the only eigenvalue of  $\mathcal R$ is 0.
If so, the only eigenvalue of the restriction of $\mathcal B$ mentioned above is $\lambda n$, which would  prove the lemma since  $\mathcal B v = \mu v$.

In order to  prove this claim,
we define for any  $\alpha \in \N^N$ with $|\alpha| = n$
 \begin{equation*}
 V(\alpha) = \sum_1^N j \alpha_j.
 \end{equation*}
Clearly  $V(\alpha^{(i)}) = V(\alpha) - 1$.
We  select a basis in the linear space of all homogeneous polynomials of degree $n$ consisting of all monomials $x^\alpha$ with $|\alpha| = n$, enumerated
in such a way that $V$ is nondecreasing. The definition of $\mathcal R$ now shows that its matrix with respect to this basis is upper triangular
 with
zeros on the diagonal. The claim follows, and so does  the lemma.
 \qed
\bigskip

\section{$B$  has two distinct eigenvalues:
a first example
}\label{Example1}
\begin{comment}
In the light of Proposition \ref{oneeigenvalue},
one could   ask  whether the fact that the generalized eigenspaces of the Ornstein--Uhlenbeck operator are orthogonal  implies    the existence of  a unique eigenvalue of $B$. This is not true in general,  as the following example shows.
\end{comment}

The following example shows that the generalized eigenspaces of the Ornstein--Uhlenbeck operator may be orthogonal even in the case when $B$ has more than one
eigenvalue.

In two dimensions, we let
\begin{equation}\label{defQBB}
Q=I_2 \;\;\text{ and }   \;\;B =
\begin{pmatrix}
 -1 & 1 \\
  -1 & -1
\end{pmatrix}.
\end{equation}
whose eigenvalues are $-1 \pm i$.

\vskip7pt

\begin{proposition}
With $N=2$, let $Q$ and $B$ be  as in \eqref{defQBB}.
 Then each generalized eigenfunction of
$\mathcal L$ is an eigenfunction. Moreover, any two eigenfunctions of
$\mathcal L$ with different eigenvalues are  orthogonal with respect to
$\gamma_\infty$.
\end{proposition}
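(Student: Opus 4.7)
The plan is to exploit the very special structure of $B$: we have $B=-I_2+R$ with $R=\begin{pmatrix}0&1\\-1&0\end{pmatrix}$ skew-symmetric, so $e^{sB}=e^{-s}e^{sR}$ is a scaled rotation. This gives two immediate payoffs. First, $e^{sB}e^{sB^*}=e^{-2s}I_2$, hence $Q_\infty=\tfrac12 I_2$, and $\gamma_\infty$ is a rotation-invariant Gaussian with density proportional to $e^{-|x|^2}$. Second, $\mathcal L$ splits as a sum $\mathcal L=\mathcal L_s+\mathcal A$, where
\begin{equation*}
\mathcal L_s f(x)=\tfrac12\Delta f(x)-\langle x,\nabla f(x)\rangle,\qquad \mathcal A f(x)=\langle Rx,\nabla f(x)\rangle,
\end{equation*}
and $\mathcal L_s$ is the classical (self-adjoint) Ornstein--Uhlenbeck operator associated with the same invariant measure $\gamma_\infty$.

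The key structural point is that $\mathcal A$ is the infinitesimal generator of the rotation group $\{e^{sR}\}$. Since $\gamma_\infty$ is rotation-invariant, the composition $f\mapsto f\circ e^{sR}$ is a unitary group on $L^2(\gamma_\infty)$, so $\mathcal A$ is skew-adjoint. Moreover, rotations preserve $\mathcal L_s$ (both the Laplacian and the radial drift $\langle x,\nabla\rangle$ are rotation-invariant), hence $\mathcal L_s$ and $\mathcal A$ commute. In particular, $\mathcal L$ is the sum of a self-adjoint operator and a commuting skew-adjoint one, i.e.\ a normal operator on $L^2(\gamma_\infty)$; this already implies the desired orthogonality in principle, but I want to see it explicitly.

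Concretely, I would use the Hermite decomposition from Subsection~2.1: $L^2(\gamma_\infty)=\overline{\bigoplus_n \mathbf H_n}$, and each $\mathbf H_n$ is finite-dimensional and $\mathcal L_s$-invariant with $\mathcal L_s|_{\mathbf H_n}=-nI$. Because $\mathcal A$ commutes with $\mathcal L_s$, $\mathcal A$ preserves each $\mathbf H_n$; being the restriction of a skew-adjoint operator to a finite-dimensional invariant subspace on which the $L^2(\gamma_\infty)$-inner product is respected, $\mathcal A|_{\mathbf H_n}$ is diagonalizable with purely imaginary eigenvalues and with an orthogonal basis of eigenvectors. Consequently $\mathcal L|_{\mathbf H_n}$ is diagonalizable with eigenvalues of the form $-n+i\mu$, $\mu\in\R$, and the corresponding eigenvectors form an orthogonal basis of $\mathbf H_n$. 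Since each generalized eigenfunction decomposes as in \eqref{decomp} into components in the various $\mathbf H_n$ which must separately be generalized eigenfunctions with the same eigenvalue, and since on each $\mathbf H_n$ the operator is already diagonalizable, every generalized eigenfunction is an actual eigenfunction. Finally, the real part of any eigenvalue in $\mathbf H_n$ equals $-n$, so two eigenfunctions with different eigenvalues either sit in distinct $\mathbf H_n$ (orthogonal by the It\^o--Wiener decomposition) or in the same $\mathbf H_n$ but in different $\mathcal A$-eigenspaces (orthogonal by skew-adjointness of $\mathcal A$).

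The step I expect to need the most care is the skew-adjointness of $\mathcal A$ on $L^2(\gamma_\infty)$, or equivalently the identification of $\{e^{s\mathcal A}\}$ with the unitary group of pullbacks by rotations. The rest amounts to the commutation $[\mathcal L_s,\mathcal A]=0$, which is a short computation using $\Delta (f\circ e^{sR})=(\Delta f)\circ e^{sR}$ and $\langle x,\nabla(f\circ e^{sR})\rangle=\langle x,\nabla f\rangle\circ e^{sR}$, and the standard finite-dimensional fact that a skew-adjoint operator on a finite-dimensional inner product space is unitarily diagonalizable.
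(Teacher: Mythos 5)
Your argument is correct, and its skeleton coincides with the paper's: both split the operator into the symmetric Ornstein--Uhlenbeck part (for which the Hermite spaces $\mathbf H_n$ are eigenspaces) plus a first-order drift term with skew-symmetric matrix, and both conclude that the restriction to each $\mathbf H_n$ is normal (a multiple of the identity plus a skew-adjoint operator), so the spectral theorem yields an orthogonal eigenbasis of each $\mathbf H_n$ and hence of $L^2(\gamma_\infty)$. Where you genuinely diverge is in how the skew-adjointness of the rotational part is established. The paper works with $A=2\mathcal L=A_1+L$ and verifies that the matrix $L^{(n)}$ of $L$ on the orthonormal Hermite basis of $\mathbf H_n$ is skew-symmetric by an explicit computation of the entries $\langle L\widetilde H_{k_1,k_2},\widetilde H_{k_1\mp1,k_2\pm1}\rangle$ using a formula from Chill et al. You instead identify $\mathcal A=\langle Rx,\nabla\rangle$ as the generator of the group of pullbacks by rotations, which is unitary on $L^2(\gamma_\infty)$ because $\gamma_\infty$ is rotation-invariant; skew-adjointness (on polynomials, which is all that is needed) then follows by differentiating $\langle p\circ e^{sR},q\circ e^{sR}\rangle=\langle p,q\rangle$ at $s=0$, and the commutation $[\mathcal L_s,\mathcal A]=0$ gives the invariance of each $\mathbf H_n$. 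Your route avoids the Hermite recurrence computations entirely and makes transparent \emph{why} normality holds (it would generalize to any $B=\lambda I+R$ with $R$ skew-symmetric and $Q=I$ in any dimension), at the price of having to be slightly careful about the domain issues you flag; the paper's route is more computational but self-contained given the cited formulas, and produces the explicit matrix $L^{(n)}$. Your treatment of generalized eigenfunctions (decompose via \eqref{decomp}, note each component is a generalized eigenvector of a diagonalizable restriction, and that the real part of the eigenvalue pins down $n$) is sound and matches the logic the paper uses implicitly.
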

\begin{proof}
One finds that
\begin{equation*}
e^{sB} = e^{-s}
\begin{pmatrix}
  \cos s & \sin s \\
  -\sin s & \cos s
\end{pmatrix}
\end{equation*}
and
\begin{equation*}
e^{sB}\,e^{sB^*} = e^{-2s}\,I_2,
\end{equation*}
so that
\begin{equation*}
Q_\infty = \frac12\,I_2,  \qquad \qquad Q_\infty^{-1} = 2\,I_2.
\end{equation*}
%and thus the density of the invariant measure with respect to Lebesgue measure is
%%\begin{equation*}  x\mapsto  c\, \exp(-x_1^2  -x_2^2) .\end{equation*}
The invariant measure is therefore
\begin{equation}\label{invm}
d\gamma_\infty(x) = \pi^{-1}\,\exp\left(-x_1^2  -x_2^2\right)\,dx.
\end{equation}

Since $Q=I_2$ and $Q_\infty$ is diagonal,  we are in the situation treated in
\cite{Chill}; see also Subsection 2.1. But  \cite{Chill}
defines the analog of our $\mathcal L$ (denoted $A$) without the coefficient
$1/2$ in front of the second-order term $\Delta$. We will adapt to the definitions and notation of
\cite{Chill}. Therefore, we replace  $\mathcal L$ by
\begin{equation*}
A = 2\mathcal L = \Delta + \langle \tilde Bx,\nabla \rangle,
\end{equation*}
where $\tilde B = 2B$.  Obviously, $A$ and $\mathcal L$ have the same (generalized) eigenfunctions.

From \cite{Chill} we will also need the diagonal matrix denoted $D_\lambda$,
which in the case considered is                         %  $Q_\infty= \frac12\,D_\lambda$,
\begin{equation*}
D_\lambda =
 \begin{pmatrix}
  \lambda_1 & 0 \\
  0 & \lambda_2
\end{pmatrix}
\end{equation*}
with $\lambda_1  = \lambda_2 = 1/2$.

The invariant measure is the same for the semigroups $\left(e^{tA}\right)$ and
$\left(e^{t\mathcal L}\right)$ by uniqueness, and given by \eqref{invm}.

For $k = (k_1,k_2) \in \N^2$ we introduce two-dimensional Hermite polynomials
\begin{equation*}
H_k(x_1,x_2) = H_{k_1}(x_1) \, H_{k_2}(x_2),
\end{equation*}
where  $H_{k_i}$ is the classical Hermite polynomial.
Here we should point out that  \cite{Chill} uses dilated Hermite polynomials denoted $H_{\lambda,k}$. But in our case there are no dilations, since the dilation factors are $\sqrt{2\lambda_i} = 1$, \hskip2pt $i=1,2$.

 The $H_k$ are mutually
orthogonal in $L^2(\gamma_\infty)$,  and
\begin{equation*}
\widetilde H_{k_1,k_2} = \frac1 {\sqrt{2^{k_1+k_2}\,k_1!\,k_2!}}\, H_{k_1,k_2}
\end{equation*}
are orthonormal. The space
 $\mathbf H_n$ is generated by the  Hermite polynomials  $H_k$ of degree $k_1+k_2 = n$, for  $n \in \N$, and  $\mathbf H_n$ is invariant under $A$.

 As in \cite[Section 4]{Chill}, we split the partial differential operator $A$ as
 $A = A_1 + L$ with
$A_1 = \Delta - \langle D_\lambda^{-1}{{x}},\, \nabla\rangle$ and
$ L = \langle C x,\, \nabla \rangle $,
 where $x=(x_1,x_2)$ and
$C = (c_{i,j})$ is the skew-symmetric matrix
 \begin{equation*}
C  = \tilde B + D_\lambda^{-1} =
\begin{pmatrix}
  0 & 2 \\
  -2 & 0
\end{pmatrix}.
\end{equation*}

Then each polynomial in  $\mathbf H_n$ is an eigenfunction of $A_1$ with eigenvalue $-2n$.
This follows from the differential equation satisfied by the Hermite polynomials; see \cite[formula (3.5)]{Chill}.

The effect of $L$ on Hermite polynomials can be read off from
a formula on page 711 of \cite{Chill}, best described as that following the
words "we find that".
We adapt this formula to the  normalized polynomials $\widetilde H_{k_1,k_2}$ with $k_1+k_2 = n$, and choose $\ell_i = k_i \pm 1$. Here the
scalar product written $\langle.,.\rangle$ is taken  in $L^2(\gamma_\infty)$.
We also use the trivial observation that $c_{1,2}\,\lambda_2 = 1$ and
$c_{2,1}\,\lambda_1 = -1$.

The result is
\begin{align*}
\langle L\,& \widetilde H_{k_1,k_2}, \widetilde H_{k_1-1,k_2+1}\rangle \\&=
\frac1 {\sqrt{2^{k_1+k_2}\,k_1!\,k_2!}}\,
\frac1 {\sqrt{2^{k_1+k_2}\,(k_1-1)!\,(k_2+1)!}}
 \,2^{k_1+k_2+1}\,k_1!\,(k_2+1)!\\
&= 2 \,\sqrt{k_1}\,\sqrt{k_2+1}
\end{align*}
for $1\le k_1 \le n$ and $0\le k_2 \le n-1$;
similarly
\begin{align*}
\langle L\, & \widetilde H_{k_1,k_2}, \widetilde H_{k_1+1,k_2-1}\rangle \\&= -\frac{1}{\sqrt{2^{k_1+k_2}\,k_1!\,k_2!}}\,
\frac{1}{\sqrt{2^{k_1+k_2}\,(k_1+1)!\,(k_2-1)!}}
 \,2^{k_1+k_2+1}\,(k_1+1)!\,k_2!\\
&= -2 \,\sqrt{k_1+1}\,\sqrt{k_2}
\end{align*}
for $0\le k_1 \le n-1$ and $1\le k_2 \le n$.
This describes the restriction of $L$  to $\mathbf H_n$  completely, since
\begin{equation*}
\langle L\, \widetilde H_{k_1,k_2}, \widetilde H_{\ell_1,\ell_2}\rangle \ne 0
\qquad \mathrm{only\hskip7pt when} \qquad (k_1-\ell_1)(k_2-\ell_2) =-1.
\end{equation*}

 In $\mathbf H_n$ we use the orthonormal basis $ \widetilde H_{n-\kappa,\kappa}\,,\; \,\kappa = 0,1,\dots, n$. A consequence of the preceding formulas is that the matrix  $L^{(n)} = (L^{(n)}_{i,j})$ of the restriction of $L$ with respect to this basis is given by
 \begin{equation*}
L^{(n)}_{\kappa +1,\kappa} = 2\,\sqrt{\kappa +1}\,\sqrt{n-\kappa}
= - L^{(n)}_{\kappa ,\kappa+1}, \qquad        0 \le \kappa \le n-1,
\end{equation*}
 all other entries of the matrix being 0. Thus  $L^{(n)}$  is skew-symmetric. The restriction to  $\mathbf H_n$ of the operator
 $A = A_1 + L$ has matrix
 $-2nI_{n+1}+ L^{(n)}$,
 and it follows that $A$ is a normal operator in
 $\mathbf H_n$. The spectral theorem for normal operators implies that
 $A$ can be diagonalized in  $\mathbf H_n$ by means of an orthogonal change of coordinates.
  Since the spaces $\mathbf H_n, \; n \in \N$, are mutually orthogonal, this proves  the proposition.
\end{proof}

\bigskip

\section{$B$  has two distinct eigenvalues:
a second example}\label{ad-case}

  In this section we exhibit a class of drift matrices $B$ with two different eigenvalues (which, in contrast with  those in  the example in Section \ref{Example1}, are real),
but such that the generalized eigenspaces associated to the corresponding Ornstein--Uhlenbeck operator $\mathcal L$ are not orthogonal.

In $\R^2$ we consider $Q=I_2$ and
\begin{equation}\label{BBB}
 B =
\begin{pmatrix}
  -a+d&0 \\
  c&-a-d
\end{pmatrix},
\end{equation} with $a>d>0$ and $c\neq 0$.
To compute the exponential of $sB$, we write $B= -aI + M$, where
\begin{equation*}M =
\begin{pmatrix}  d&0 \\
  c&-d\end{pmatrix}.
\end{equation*}
 Since  $MM = d^2 I$,   we get for $s>0$
 \[
 \exp(sB)= e^{-as}\,\left( \cosh(sd)\, I  +  d^{-1} \, \sinh (sd)\, M \right)\,.
 \]
This leads to
\begin{align*}
\exp(sB)\,\exp(sB^*)
=& \,
  e^{-2as}
 \begin{pmatrix}
   e^{2sd}& \frac c d \, e^{sd} \sinh (sd)  \\
\frac c d \, e^{sd} \sinh (sd)\qquad
  & \frac {c^2} {d^2} \,   \sinh^2 (sd) +  e^{-2sd}
\end{pmatrix}.
\end{align*}
Integrating this matrix over $0<s<\infty$, we obtain
\begin{align*}
Q_\infty %&=
%\begin{pmatrix}\frac1{2(a-d)}& \frac1{4a(a-d)} \\
% \frac1{4a(a-d)} & \frac1{4d^2} [\frac{1}{2(a-d)}+ \frac{1}{2(a+d)}-\frac1a] + \frac{1}{2(a+d)} \end{pmatrix}\\
&=\begin{pmatrix}
\frac1{2(a-d)}& \frac{c}{4a(a-d)} \\
 \frac{c}{4a(a-d)} &  \frac{c^2}{4a(a-d)(a+d)} + \frac{1}{2(a+d)}
 \end{pmatrix}
 ,
 \end{align*}
and so
%\[\text{det}Q_\infty=\frac{1+4a^2}{16a^2(a-d)(a+d)},\] and
\begin{align*}
\frac12\,Q_\infty^{-1}
%&= \frac{8a^2(a-d)(a+d)}{1+4a^2}\,\begin{pmatrix}\frac{1}{4a(a-d)(a+d)} + \frac{1}{2(a+d)}
%& -\frac1{4a(a-d)} \\- \frac1{4a(a-d)} & \frac1{2(a-d)} \end{pmatrix}\\
&=\frac1{c^2+4a^2}
\begin{pmatrix}
{2a[c^2+2a(a-d)]}{}
& -{2ac(a+d)}{} \\
{}&{}\\
-{2ac(a+d)}{} &{4a^2(a+d)}{}
 \end{pmatrix}.
\end{align*}

The invariant measure $\gamma_\infty$ is thus proportional to
\begin{multline*}
 \exp\left(-
\frac{2a[c^2+2a(a-d)]}{c^2+4a^2}\, x_1^2
 +\frac{4ac(a+d)}{c^2+4a^2}\, x_1 x_2
- \frac{4a^2(a+d)}{c^2+4a^2}\, x_2^2\right)dx
\\
= \exp\big(-
{(a-d)}\, x_1^2\big)\;
\exp\left(-\frac{a+d}{c^2+4a^2}
\left(c x_1-2a  x_2
\right)^2\right)dx.
\end{multline*}

Writing
$z_1 =\sqrt{a-d}\: x_1$ and
$z_2 =\sqrt{\frac{a+d}{c^2+4a^2}}\:
\big(2a  x_2-c x_1\big)$
and recalling that  $\gamma_\infty$ is a probability measure,
we see that                      %$\gamma_\infty$ is also proportional to the measure
\begin{align*}
d\gamma_\infty&=
 \pi^{-1}\,\exp\big(- z_1^2-
z_2^2\big)\, dz.
\end{align*}
%%%Further,
%%%%\[x_2=\frac{1}{2a}\left(
%%%%\sqrt{\frac{c^2+4a^2}{a+d}} z_2+\frac{c}{\sqrt{a-d}} z_1 \right).\]

To find some eigenfunctions of $\mathcal L$, we consider
 polynomials in $x_1,x_2$ of degree 2. %It is rather easy to find eigenfunctions by trying with a generic even second-degree polynomial in $x_1,x_2$. The result is  the eigenfunctions
One finds that
 \begin{align*}
  v_1 &= x_1^2 -\frac1{2(a-d)},\\
   v_2 &= x_1^2 -\frac{2d}{c}\, x_1x_2 -\frac1{2a},\\
  v_3 &= x_1^2 -\frac{4d}{c}\,x_1x_2 + \frac{4d^2}{c^2}\, x_2^2 -\frac{c^2+4d^2}{2c^2(a+d)}
\end{align*}
are eigenfunctions, with  eigenvalues   $-2(a-d),\:
  -2a$ and  $-2(a+d)$, respectively.

 Any two of these polynomials turn out not to be orthogonal with respect to
the invariant measure, as follows by straightforward computations. We sketch one example.

One simply multiplies $v_1$ and $v_3$ and rewrites the product in terms of $z_1$ and $z_2$. Doing so, one can neglect all terms of odd order in  $z_1$ or $z_3$, when integrating with respect to $\gamma_\infty$. Writing "$\mathrm{ odd}$" for such terms, we find that the product is

\begin{align*}
& \frac{1 }{a^2}\,  z_1^4
 + \frac{d^2 (c^2+4a^2)}{a^2c^2 (a^2-d^2)}\,z_1^2 z_2^2
 -\Big[\frac{c^2+4d^2}{2c^2(a^2-d^2)} +\frac{1}{2a^2} \Big]\, z_1^2
\\
&-\frac{d^2(c^2+4a^2)}{2a^2c^2(a^2-d^2)}\,z_2^2
+
  \frac{c^2+4d^2}{4c^2(a^2-d^2)}
  + \mathrm{ odd}.
\end{align*}
Integrating and simplifying, we get
\begin{align*}
 \int& v_1v_3 \,d\gamma_\infty
    =
 \frac{1 }{2a^2 } \,>\,0,
\end{align*}
so $v_1$ and $v_3$ are not orthogonal.

\begin{remark}
Let now  $d=a/2$ in this example.
Then the fourth-degree polynomial
\begin{align*}
  v_4 = x_1^4 - \frac{6}{a}\,x_1^2 + \frac{3}{a^2}
\end{align*}
is an eigenfunction of $\mathcal L$ with eigenvalue $-2a$, like $v_2$.
Thus eigenfunctions of different polynomial degrees can have the same eigenvalue.
This shows that for an eigenfunction $u$, the sum in \eqref{decomp}
 may consist of more than one term, and a (generalized) eigenspace need not be contained in one   $\mathbf H_n$.
\end{remark}

\vskip5pt

  The eigenvalues of the matrix $B$  defined in  \eqref{BBB} are $-a\pm d$, and it is easily seen that the corresponding eigenspaces are not orthogonal  in $\R^2$. This turns out to be related to the non-orthogonality  of the eigenspaces of  $\mathcal L$, at least in two dimensions, in the following way.

%%Finally, we observe that if $B$ is defined as in  \eqref{BBB}, its eigenspaces  are given by
%%\begin{align*}
%%E_{(-a-d)}&=\langle (0,1)\rangle\qquad\text{and}\qquad
%%%E_{(-a+d)}=\langle (2d/c\,,1)\rangle,
%%%\end{align*}
%%%thus they are not orthogonal in $\R^2$.
%%%We conclude generalizing this fact.

\begin{proposition}
Let  $N=2$ and $Q = I$, and                    %$Q=I$.
assume that $B$ has two different,  real eigenvalues.
Then the generalized eigenspaces of $\mathcal L$  are orthogonal in  $L^2(\gamma_\infty)$  if and only if the two eigenspaces  of $B$ are orthogonal in $\R^2$.
  \end{proposition}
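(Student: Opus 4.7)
The approach is to use an orthogonal change of coordinates $x = Uy$. This preserves the condition $Q = I$ (the Laplacian is orthogonally invariant), sends $B$ to $B' = U^T B U$, and induces an isometry $L^2(\gamma_\infty)\to L^2(\gamma'_\infty)$ mapping (generalized) eigenfunctions of $\mathcal L$ to (generalized) eigenfunctions of the transported operator $\mathcal L'$ with the same eigenvalues. Thus both sides of the biconditional are invariant under orthogonal conjugation of $B$, and I may freely choose $U$ to bring $B'$ into a convenient form. I will also use the elementary fact that a $2\times 2$ real matrix with two distinct real eigenvalues has orthogonal eigenspaces in $\R^2$ if and only if it is symmetric.

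For sufficiency, assume $B$ is symmetric and choose $U$ orthogonal so that $B' = \mathrm{diag}(\mu_1,\mu_2)$. In the $y$-coordinates,
\begin{equation*}
\mathcal L' = \tfrac12\bigl(\partial^2_{y_1}+\partial^2_{y_2}\bigr) + \mu_1\, y_1\, \partial_{y_1} + \mu_2\, y_2\, \partial_{y_2}
\end{equation*}
splits as a sum of two one-dimensional Ornstein--Uhlenbeck operators, and $\gamma'_\infty$ factors as a product of two one-dimensional Gaussians. The tensor products of one-dimensional Hermite polynomials in $y_1$ and $y_2$ then form a complete orthogonal set of eigenfunctions of $\mathcal L'$; hence every generalized eigenspace is in fact an ordinary eigenspace spanned by such products, and the orthogonality of distinct generalized eigenspaces is immediate.

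For necessity, I argue by contrapositive. Suppose the two eigenspaces of $B$ are not orthogonal, let $\mu_1>\mu_2$ be the eigenvalues of $B$ (both negative), let $w_2$ be a unit eigenvector of $B$ for $\mu_2$, and let $w_1$ be a unit vector orthogonal to $w_2$. Setting $U = (w_1\;w_2)$, a direct computation gives
\begin{equation*}
B' = U^T B U = \begin{pmatrix}\mu_1 & 0\\ \beta & \mu_2\end{pmatrix}
\end{equation*}
for some $\beta\in\R$; the non-orthogonality of the $B$-eigenspaces forces $\beta\ne 0$, since otherwise $w_1$ would be an eigenvector for $\mu_1$ orthogonal to $w_2$. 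Putting $a=-(\mu_1+\mu_2)/2$, $d=(\mu_1-\mu_2)/2$, $c=\beta$, one has $a>d>0$ and $c\neq 0$, so $B'$ has precisely the form \eqref{BBB} of Section \ref{ad-case}. The explicit calculation carried out there, yielding $\int v_1 v_3\, d\gamma'_\infty = 1/(2a^2)\neq 0$, shows that two eigenfunctions of $\mathcal L'$ with distinct eigenvalues fail to be orthogonal; pulling back by the isometry induced by $U$ gives the corresponding failure in $L^2(\gamma_\infty)$.

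The main thing to verify carefully is the invariance of both sides of the biconditional under orthogonal changes of coordinates; with $|\det U|=1$ the Gaussian density and the $L^2(\gamma_\infty)$ inner product transform transparently, so this bookkeeping is routine and does not present a serious obstacle.
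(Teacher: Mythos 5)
Your proposal is correct and follows essentially the same route as the paper: reduce by an orthogonal change of coordinates (which preserves $Q=I$, the eigenvalue structure of $B$, and the $L^2(\gamma_\infty)$ geometry), handle the orthogonal-eigenvector case via symmetry of $B$, and in the non-orthogonal case use a Schur-type triangularization to land exactly in the form \eqref{BBB}, where $\int v_1v_3\,d\gamma_\infty=1/(2a^2)\ne 0$ gives the failure. The only differences are cosmetic: you make the Schur step and the parameter matching $a=-(\mu_1+\mu_2)/2$, $d=(\mu_1-\mu_2)/2$, $c=\beta$ explicit, and you justify the symmetric case by an explicit tensor-product Hermite basis rather than by citing the symmetry of $\mathcal L$.
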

\begin{proof}

    To begin with, we consider a coordinate change $\widetilde x = Hx$, where $H$ is an orthogonal matrix.             Simple computations show that the operator $\mathcal L^{Q,B}$ is transformed to $\mathcal L^{\widetilde Q, \widetilde B}$ in the new coordinates, with $\widetilde Q =HQH^*$ and $\widetilde B =HBH^*$; cf.\
   \cite[p. 474]{MPRS}. In our case, $\widetilde Q = Q = I$. The eigenvalues of $B$ and the angle between its eigenvectors will not change.

To prove the proposition, assume first that the (real) eigenvectors of $B$ are orthogonal in $\R^2$.
                   %and the eigenvalues of $B$ are real,
 Then $B$ is symmetric, since it can be diagonalized by means of an orthogonal  change of coordinates as just described.
   % Indeed, there exist an orthogonal matrix $H$ and a diagonal matrix $D$, both  $2\times 2$,  such that $B=H^T D H$. Thus \[B^T=\big( H^T D H  \big)^T= H^T D^T H=B.\]
 This implies that $\mathcal L$
 is symmetric (\cite[Proposition 9.3.10]{Lorenzi}), so that the orthogonality of its eigenspaces is trivial.

                 % ``$\Rightarrow$".
          Next, we assume that
   the eigenvectors of $B$ are  not orthogonal in $\R^2$. %Then $B$ is not symmetric.
  By Schur's decomposition theorem (see \cite[Theorem 2.3.1]{Horn}) there exists an orthogonal  change of coordinates which makes  $B$
            %and a lower triangular matrix $\tilde B$, both $2\times 2$,
             lower triangular, though not diagonal. We are thus in
               %The eigenvalues of $\tilde B$ and $B$ are the same,  in particular real and distinct. The coordinate change $x = H \tilde x$, will take us to
  the situation described in  \eqref{BBB}. As we have seen,   some eigenspaces of $\mathcal L$ are then not orthogonal with respect to the invariant measure.
 \end{proof}

 We finally remark that the  ``if" part of this proposition easily extends to arbitrary dimension $N$.
 Then it is assumed that  $B$ has N different,  real eigenvalues with mutually orthogonal eigenspaces.

% \begin{proposition} Let $B$ have N different,  real eigenvalues
%If  all eigenvectors  of $B$ are pairwise orthogonal in $\R^2$, then
 %the generalized eigenfunctions of $\mathcal L$  are orthogonal in  $L^2(\gamma_\infty)$.
 % \end{proposition}

% \begin{proof}
%Since  there exists an  orthonormal  basis of $\R^N$ consisting of eigenvectors, then $B$ is symmetric, and therefore $\mathcal L$
 %is symmetric as well.
% \end{proof}

\end{document}